\newcommand{\nc}{\newcommand}
\nc{\lie}[1]{\mathfrak{#1}}
\theoremstyle{plain}
\newtheorem{thm}[subsection]{Theorem}
\newtheorem*{thm*}{Theorem}
\theoremstyle{definition} 
\newtheorem{lem}[subsection]{Lemma}
\newtheorem{cor}[subsection]{Corollary}
\newtheorem{prop}[subsection]{Proposition}
\newtheorem{defn}[subsection]{Definition}
\nc{\C}{\mathbb{C}}
\title{A minimal Gr\"obner basis for simple $\mathfrak{sl}_n$- or $\mathfrak{sp}_n$-modules}
\author[]{Ghislain Fourier and León van Eß}
\address{Chair of Algebra and Representation Theory, RWTH Aachen University}
\email{fourier@art.rwth-aachen.de}
\email{leon.van.ess@rwth-aachen.de}
\begin{document}

\begin{abstract}
We explicitly provide minimal Gröbner bases for simple, finite-dimensional modules of complex Lie algebras of types A and C, using a homogeneous ordering that is compatible with the PBW filtration on the universal enveloping algebras.
\end{abstract}
\maketitle
\section{Introduction}
For a Lie algebra, the PBW (Poincaré-Birkhoff-Witt) filtration on its enveloping algebra is induced by the natural degree. The famous PBW theorem states that the associated graded algebra is the symmetric algebra. PBW filtrations and degenerations of simple, finite-dimensional, complex Lie algebras $\lie g$ and their finite-dimensional modules have been in focus in the past fifteen years, due to their geometric implications (see, for example, \cite{Fei12}) and their related combinatorics (\cite{BF20, FR21}). In \cite{FFLa1, FFLa2}, new monomial bases (sometimes called FFLV bases) of these modules that are compatible with the PBW filtration have been provided.\\
Simple, finite-dimensional modules for simple, finite-dimensional complex Lie algebras are indexed by their highest weight $\lambda$. Then $V(\lambda) \cong U(\lie g)/I_\lambda$ for a left ideal $I_\lambda$. In this note, we are considering a monomial ordering on $U(\lie g)$ introduced in \cite{FFLa1, FFLa2} and compute an explicit (left-) Gröbner basis $M_\lambda$ (see Definition~\ref{defn-gb})  for $I_\lambda$. Since we are using the results of loc. cit., we restrict ourselves to Lie algebras of type $A$ and $C$. The main theorem is
\begin{thm*}
Let $\lie g$ be of type $A$ or $C$, and $\lambda$ a dominant integral weight. $M_\lambda$ is a minimal (left) Gröbner basis of $I_\lambda$ and the induced monomial basis of $V(\lambda)$ is the FFLV basis. 
\end{thm*}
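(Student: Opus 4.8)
The plan is to verify the three conditions packaged in Definition~\ref{defn-gb}: that $M_\lambda \subseteq I_\lambda$, that the leading terms of the elements of $M_\lambda$ generate the initial ideal $\operatorname{LT}(I_\lambda)$, and that this generating set is minimal. The chosen ordering refines the total PBW degree, hence is a well-ordering on the (finitely many, in each degree) PBW monomials of $U(\lie g)$, so reduction terminates and the usual criterion applies: once $M_\lambda \subseteq I_\lambda$ is known, $M_\lambda$ is a Gröbner basis exactly when the standard monomials — the PBW monomials divisible by no $\operatorname{LT}(m)$, $m \in M_\lambda$ — are linearly independent modulo $I_\lambda$. Since standard monomials always span $U(\lie g)/I_\lambda = V(\lambda)$, and $V(\lambda)$ is finite-dimensional, this reduces to counting standard monomials and comparing with $\dim V(\lambda)$.

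First I would check $M_\lambda \subseteq I_\lambda$ generator by generator. Write $\lie g = \lie n^-\oplus\lie h\oplus\lie n^+$ and let $v_\lambda$ be a highest weight vector. Those members of $M_\lambda$ that are raising operators $e_\beta$ ($\beta>0$) or elements $h-\lambda(h)$ ($h\in\lie h$) annihilate $v_\lambda$ by definition of the highest weight. The remaining elements are assembled from the power relations $f_{\alpha_i}^{\lambda(\alpha_i^\vee)+1}$ and their images under the adjoint action of $\lie n^+$; these lie in $I_\lambda$ because $f_{\alpha_i}^{\lambda(\alpha_i^\vee)+1}v_\lambda = 0$ by $\lie{sl}_2$-theory and because $V(\lambda)$ is the quotient of the Verma module $M(\lambda)$ by the ($\lie n^+$-stable) submodule $\sum_i U(\lie g)f_{\alpha_i}^{\lambda(\alpha_i^\vee)+1}v_\lambda$. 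Concretely, each such element is of the form $u\cdot f_{\alpha_i}^{\lambda(\alpha_i^\vee)+1}$ modulo $\sum_\beta U(\lie g)e_\beta + \sum_h U(\lie g)(h-\lambda(h))$, and therefore kills $v_\lambda$.

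Next comes the combinatorial core. The leading terms of $e_\beta$ and of $h-\lambda(h)$ are $e_\beta$ and $h$, so a standard monomial can involve no raising operator and no Cartan element: it is a pure monomial $f^{\mathbf s}=\prod_{\beta>0}f_\beta^{s_\beta}$ in the lowering operators. Among these I would show — using homogeneity of the ordering together with the polytope combinatorics of \cite{FFLa1,FFLa2} — that the leading terms of the higher-degree members of $M_\lambda$ are exactly the minimal monomials $f^{\mathbf t}$ whose exponent vector $\mathbf t$ violates one of the defining inequalities of the FFLV polytope (indexed by Dyck paths in type $A$, by symplectic Dyck paths in type $C$). Because the lattice points of that polytope form a downward-closed set, these minimal ``forbidden'' monomials generate precisely the complementary monomial ideal, so the standard monomials are exactly $\{f^{\mathbf s}: \mathbf s \text{ a lattice point of the FFLV polytope}\}$. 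By \cite{FFLa1,FFLa2} the number of these equals $\dim V(\lambda)$, so by the criterion above $M_\lambda$ is a Gröbner basis of $I_\lambda$ and the standard monomials descend to a basis of $V(\lambda)$ — which is, by definition, the FFLV basis.

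Finally, minimality: no leading term of an element of $M_\lambda$ may be divisible by the leading term of another. The leading terms $e_\beta$ and $h$ are pairwise non-proportional and none divides a pure $f$-monomial, while the $f$-monomial leading terms of the remaining elements are mutually non-divisible because each one records a \emph{facet}-defining inequality of the FFLV polytope being saturated for the first time; dropping any single element would enlarge the set of standard monomials strictly beyond the lattice points of the polytope, contradicting the count. I expect the main obstacle to be the middle step: producing, for every Dyck (resp.\ symplectic Dyck) path, an explicit element of $M_\lambda$ whose leading monomial is the associated minimal forbidden monomial, and checking that these assignments introduce no redundancy — that is, that the leading terms form a \emph{minimal} generating set of $\operatorname{LT}(I_\lambda)$ and not merely a generating set. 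By contrast, the representation-theoretic inclusion $M_\lambda\subseteq I_\lambda$ and the abstract Gröbner argument are routine once the results of \cite{FFLa1,FFLa2} are invoked.
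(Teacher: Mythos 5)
Your outline is correct, but you prove the Gröbner property by a genuinely different mechanism than the paper. The paper proceeds in three steps: it first shows the two-sided equality of $I_\lambda$ with the ideal generated by $M_\lambda$; it then proves the Gröbner property directly, by taking an arbitrary $m \in I_\lambda$, reducing to the case where $LT(m)$ is a pure monomial in the $f_\beta$, arguing that if this monomial violated no Dyck-path inequality then $m$ could not lie in $I_\lambda$ (this uses that FFLV monomials are a basis, via the triangularity $m = LT(m) + \sum m'$ with $m' < LT(m)$), and otherwise choosing a minimal violated path and adjusting the exponents to produce $\mathbf s'$ with $\partial_{\mathbf s'} f_{i,j}^{\deg \mathbf s'} \in M_\lambda$ whose leading monomial divides $LT(m)$; minimality is then obtained by a support-containment and path-adjustment argument. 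You instead run a Macaulay-type count: standard monomials always span $U(\lie g)/I_\lambda$, you identify them with the lattice points of the FFLV polytope (your claim that the exponent vectors occurring in Definition~\ref{defn-gb} are exactly the componentwise-minimal vectors outside the polytope is indeed a short check from the minimality clause in that definition), and you conclude by comparing with $\dim V(\lambda)$; minimality then follows since minimal generators of a monomial ideal are mutually non-divisible. Your route delivers the statement ``the induced monomial basis is the FFLV basis'' simultaneously and sidesteps the paper's slightly delicate step that an element of $I_\lambda$ cannot have an FFLV monomial as leading term, at the price of invoking the dimension count; the paper's route gives an explicit divisibility argument for every leading term of the ideal. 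Two remarks: the ``main obstacle'' you flag — exhibiting, for each minimal forbidden monomial, an element of $M_\lambda$ with exactly that leading monomial — is precisely the fact $LM(\partial_{\mathbf s} f_{i,j}^{\deg \mathbf s}) = f^{\mathbf s}$ quoted from \cite{FFLa1,FFLa2}, so it is an available input rather than new work; and for minimality your non-divisibility argument treats distinct leading monomials only, so (like the paper, which dismisses this case in one line) you should also rule out that two distinct elements of $M_\lambda$, arising from the same exponent vector but different pairs $(i,j)$, share one leading monomial.
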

The first non-trivial example is for $\lie{sl}_3$, and $\lambda$ being the highest root (or the minimal regular, dominant, integral weight). A minimal Gröbner basis is then (the simple roots are $\alpha_1, \alpha_2$) for the particular ordering (see Section~\ref{subsec-gb}):
\begin{multline*}
\{ e_{\alpha_1}, e_{\alpha_2}, e_{\alpha_1 + \alpha_2}, h_{\alpha_1} - 1, h_{\alpha_2} - 1,\\  f_{\alpha_1}^2, f_{\alpha_2}^2, f_{\alpha_1 + \alpha_2}^3, f_{\alpha_2}f_{\alpha_1 + \alpha_2} f_{\alpha_1} + \frac{1}{2} f_{\alpha_1 + \alpha_2}^2,  f_{\alpha_1 + \alpha_2}^2 f_{\alpha_1},  f_{\alpha_2} f_{\alpha_1 + \alpha_2}^2 \}
\end{multline*}

\medskip
Our proofs rely on the straightening law proved in loc. cit. Although FFLV type bases are known for $G_2$ (\cite{Gor15}) and in type $B$ (\cite{Mak19}), our methods cannot be applied right away as they do not provide a similar straightening law.\\
Note that Gröbner bases of $I_\lambda$ have been computed in the context of Gröbner-Shirshov bases for different orderings, for example in \cite{KL00}. Their monomial basis provides a Gelfand-Tsetlin type monomial basis \cite{GT50} instead of the FFLV basis.\\
Note further that $M_\lambda$ is generally not reduced. Consider $\lambda = \omega_1 + \omega_2 + 2 \omega_3$ for $\lie{sl}_4$; then one has
\[
     f_{\alpha_2+\alpha_3}^2 f_{\alpha_1+\alpha_2+\alpha_3} f_{\alpha_1+\alpha_2}^2 + 2  f_{\alpha_2+\alpha_3} f_{\alpha_2} f_{\alpha_1+\alpha_2+\alpha_3}^2 f_{\alpha_1+\alpha_2} + \frac{1}{3} f_{\alpha_2}^2  f_{\alpha_1+\alpha_2+\alpha_3}^3
\]\\
as an element of $M_\lambda$. Since $f_{\alpha_2}^2$ is also contained, the set is not reduced.
These computations are carried out using OSCAR \cite{Oscar} and the implementation of the Gröbner basis for other types can be found in the latter.
\medskip

The paper is organized as follows: In Section~\ref{sec-pre}, we recall the PBW filtration and the FFLV polytopes. In Section~\ref{sec-main}, we provide the proof of the main result.
\medskip

\noindent\textbf{Acknowledgments} 
The first author gratefully acknowledges financial support by the DFG –Project-ID 286237555–TRR 195. The authors would like to express their gratitude to Thomas Breuer for his patient explanations of GAP.

\section{Preliminaries}\label{sec-pre}
Let $\lie g$ be a finite-dimensional, complex simple Lie algebra and $ \lie g = \lie n^+ \oplus \lie h \oplus \lie n^-$ be a triangular decomposition. We denote the set of roots (positive roots resp.) $R$ ($R^+$), the simple roots are denoted $\alpha_1, \ldots, \alpha_n$, the weight lattice $P$, the dominant weights $P^+$. 
The fundamental weights are denoted $\omega_1, \ldots, \omega_n$. For each $\beta \in R^+$, we fix root operators $e_\beta \in \lie n^+_{\beta}$ and $f_\beta \in \lie n^-_{- \beta}$. The universal enveloping algebra is denoted $U(\lie g)$.\\
For $\lambda \in P^+$, we denote $V(\lambda)$ the simple, finite-dimensional highest weight module of highest weight $\lambda$ and fix a highest weight vector $v_\lambda$, then $V(\lambda) = U(\lie n^-).v_\lambda$. We denote $I_\lambda \subset U(\lie g)$ the defining left ideal of $V(\lambda)$ as a module for $U(\lie g)$.

\subsection{PBW filtration}
For a given Lie algebra $\lie a$, there is a natural filtration on the universal enveloping algebra $U(\lie a)$:
\[
U(\lie a)_s := \langle x_{i_1} \cdots x_{i_t} \mid t \leq s, x_{i_j} \in \lie a \rangle_\C
\]
The PBW theorem states, that the associated graded algebra is isomorphic to the symmetric algebra on (the vector space) $\lie a$. Consequently, for every cyclic $\lie a$-module $M$ with fixed generator $m$, we obtain an induced $S(\lie a)$-module $M^a$.\\

We apply this construction to $\lie n^-$ and the cyclic module $V(\lambda)$ with generator $v_\lambda$. In this case, we are not only obtaining a $S(\lie n^-)$-module structure on $V(\lambda)^a$ but, since the PBW filtration on $\lie n^-$ is invariant under the $\lie b := \lie h \oplus \lie n^+$-action, an action of $S(\lie n^-)U(\lie b)$. This is called the PBW degenerate module $V(\lambda)^a$, we denote $I(\lambda)^a \subset S(\lie n^-)$ the defining ideal. \\
We recall from \cite{FFLa1, FFLa2} the notion of Dyck paths for Lie algebras of type $A$ and $C$. Here write short $\alpha_{i,j} = \alpha_i + \ldots + \alpha_j$ and $\alpha_{i, \overline{j}} = \alpha_i + \ldots + \alpha_n + \ldots + \alpha_{j}$ for $i \leq j$. We introduce an ordering on positive roots 
\[
\alpha_{i,j} \leq \alpha_{k,\ell} :\Leftrightarrow i \leq j \text{ and } j \leq \ell
\] with $1 < 2 < \ldots < n < \overline{n-1} < \ldots < \overline{1}$.
Let $\lie g$ be of type $A$ or $C$, a sequence of positive roots $(\beta_1, \ldots, \beta_s)$ is called a Dyck path if
\begin{itemize}
    \item $\beta_1$ is a simple root.
    \item $\beta_s$ is either a simple root or highest root of a type $C$ root system.
    \item $\beta_k < \beta_{k+1}$ for all $k$.
\end{itemize}
The set of Dyck paths starting in $\alpha_i$ and ending in $\alpha_j$ are denoted $D_{i,j}$, the Dyck paths starting in $\alpha_i$ and ending in any highest root of a type $C$ subdiagram are denoted $D_{i,n}$.
The following has been shown in \cite{FFLa1, FFLa2}
\begin{thm}
    Let $\lambda = \sum m_i \omega_i \in P^+$, then 
    \[
    \left\{\left. \prod_{\alpha \in R^+} f_\alpha^{s_\alpha}.v_\lambda \in V(\lambda) \;\right|\; \forall \mathbf{p} \in D_{i,j}: \sum_{\alpha \in \mathbf{p}} s_\alpha \leq m_i + \ldots + m_j \right\}
    \]
    forms a basis of $V^a(\lambda)$ and for any fixed ordering in the monomials also a basis of $V(\lambda)$.
\end{thm}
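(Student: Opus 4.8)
The two assertions are linked by a general principle about filtered vector spaces: since $V^a(\lambda)$ is, by construction, the associated graded of $V(\lambda)$ for the PBW filtration, any homogeneous basis of $V^a(\lambda)$ lifts to a basis of $V(\lambda)$ under \emph{any} choice of representatives, and fixing an order for the factors of $\prod_{\alpha\in R^+}f_\alpha^{s_\alpha}$ is precisely such a choice; moreover $\dim V^a(\lambda)=\dim V(\lambda)$. So the plan is to prove that the set $S(\lambda)$ of monomials $\prod_{\alpha\in R^+}f_\alpha^{s_\alpha}.v_\lambda$ whose exponent tuples satisfy all the Dyck path inequalities --- equivalently, lie in the bounded polytope $P(\lambda)\subset\mathbb R^{R^+}$ cut out by them --- is a basis of $V^a(\lambda)$; since $|S(\lambda)|=|P(\lambda)\cap\mathbb Z^{R^+}|$, the same monomials in any order then also form a basis of $V(\lambda)$.

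First I would show that $S(\lambda)$ spans $V^a(\lambda)=S(\lie n^-)/I(\lambda)^a$. The opening step is to exhibit enough elements of the ideal $I(\lambda)^a$: from $\lie{sl}_2$-theory in $V(\lambda)$ one has $f_\beta^{\langle\lambda,\beta^\vee\rangle+1}.v_\lambda=0$ for every $\beta\in R^+$, and passing to the associated graded these powers together with all their images under the (now abelian, derivation-like) action of $\lie n^+$ lie in $I(\lambda)^a$. The core is then the \emph{straightening law}: with respect to a monomial order on $S(\lie n^-)$ that refines total degree and the root order $1<\ldots<n<\overline{n-1}<\ldots<\overline1$, one shows that whenever an exponent tuple violates some Dyck path inequality, the corresponding monomial is congruent modulo $I(\lambda)^a$ to a linear combination of strictly smaller monomials of the same total degree; since each total degree contains only finitely many monomials, iterating the rewriting terminates and expresses every element of $V^a(\lambda)$ through $S(\lambda)$. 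Extracting the precise combinatorial identities that realise the Dyck path relations inside the $\lie n^+$-orbit of the elements $f_\beta^N$ is the step I expect to be the main obstacle; it is also where type $C$ genuinely differs, the highest roots of type $C$ subdiagrams requiring separate treatment --- which is exactly why such roots are admitted as endpoints of Dyck paths.

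It then remains to check that $S(\lambda)$ has the correct size, $|P(\lambda)\cap\mathbb Z^{R^+}|=\dim V(\lambda)$, because together with spanning this forces $S(\lambda)$ to be a basis (so that its linear independence in $V^a(\lambda)$, hence in $V(\lambda)$ for every ordering, follows). Here I would first treat the fundamental weights $\lambda=\omega_i$ by a direct count --- in type $A$, $P(\omega_i)$ is essentially a $0/1$-polytope with $\binom{n+1}{i}=\dim V(\omega_i)$ lattice points, and similarly in type $C$ --- and then use the Minkowski-sum property $P(\lambda)+P(\mu)=P(\lambda+\mu)$ of the FFLV polytopes, together with their normality, to see that $\lambda\mapsto|P(\lambda)\cap\mathbb Z^{R^+}|$ agrees with a polynomial in the $m_i$; comparing this polynomial with the Weyl dimension formula on finitely many weights built from the $\omega_i$ then finishes the proof.
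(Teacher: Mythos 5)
This theorem is not proved in the paper at all: it is quoted from \cite{FFLa1, FFLa2}, so the comparison has to be with the arguments given there. Your preliminary reduction (a monomial basis of the associated graded lifts to a basis of $V(\lambda)$ for any ordering of the factors, since $\dim V^a(\lambda)=\dim V(\lambda)$) is correct, and your spanning step is exactly the route of loc.\ cit.: the elements $U(\lie n^+).f_\alpha^{\lambda(h_\alpha)+1}$ lie in $I^a(\lambda)$, and the straightening law rewrites any monomial violating a Dyck path inequality as a combination of strictly smaller monomials of the same degree, so the FFLV monomials span $V^a(\lambda)$. Up to this point you match the cited proof.

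The gap is in your final counting step. Spanning only gives $\dim V(\lambda)\le |P(\lambda)\cap\mathbb{Z}^{R^+}|$, and you propose to obtain the reverse by showing the lattice-point count is a polynomial in $m_1,\dots,m_n$ (via the Minkowski property and normality) and ``comparing with the Weyl dimension formula on finitely many weights built from the $\omega_i$''. This does not close: both functions are polynomials of total degree $|R^+|$ in $n$ variables, so to identify them you must know their values agree on a grid of that order of magnitude, and evaluating the lattice-point count as $\dim V(\lambda)$ at any non-fundamental weight is precisely the statement being proved --- the only weights you verify independently are the $\omega_i$, which is far from determining the polynomial; knowing in addition the one-sided inequality everywhere does not rescue this. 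Moreover the inputs you invoke as known --- $P(\lambda)+P(\mu)=P(\lambda+\mu)$ at the level of lattice points, integrality and normality of the FFLV polytopes, hence polynomiality of the count --- are themselves nontrivial theorems established in the same papers alongside the basis theorem, so they cannot be used as free black boxes here. The cited proof avoids any Ehrhart-type count: it proves linear independence by induction on $\lambda$, using $S(\lambda)+S(\mu)\subseteq S(\lambda+\mu)$ and the Cartan component $U(\lie n^-)(v_\lambda\otimes v_\mu)\subseteq V(\lambda)\otimes V(\mu)$; each vector $f^{\mathbf r}(v_\lambda\otimes v_\mu)$ with $\mathbf r=\mathbf s+\mathbf t$ has, with respect to a suitable order, triangular leading term $f^{\mathbf s}v_\lambda\otimes f^{\mathbf t}v_\mu$, so independence for $\lambda$ and $\mu$ (base case: fundamental weights, checked by hand) forces independence, i.e.\ $|S(\lambda+\mu)|\le\dim V(\lambda+\mu)$. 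Replacing your counting step by such a tensor-product triangularity argument (or by an independent combinatorial identification of the count, e.g.\ with Gelfand--Tsetlin patterns in type $A$, itself a nontrivial bijection and more delicate in type $C$) is what is needed to make the proof complete.
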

These bases were constructed to provide generators of the ideal $I^a(\lambda)$, which turns out to be  \cite{FFLa1, FFLa2}
$$\{ U(\lie n^+).f_\alpha^{\lambda(h_\alpha)+1} \}. $$\\

\subsection{Gröbner bases}\label{subsec-gb}
For the following, we need to introduce a bit of notation:
Let $\lambda  = \sum m_i \omega_i\in P^+$, we set $\lambda_i = \sum_{j=i}^n m_j$. Then $\lambda$ is also the partition $(\lambda_1 \geq \lambda_2 \geq \ldots \geq \lambda_n \geq 0)$. Let $\mathbf{s} \in\mathbb{R}^{|R^+|}$ be supported on the roots of the subdiagram induced from $\alpha_\ell, \ldots, \alpha_k$. \\

Since $I_\lambda$ is finitely generated and  $U(\lie g)$ is a G-algebra (also known as PBW algebra or algebra of solvable type), a (left-)Gröbner basis exists for each adequate monomial ordering (see \cite{Kan}). We recall the monomial ordering on $U(\lie n^-)$ introduced in \cite{FFL17} here:\\
A refinement of the natural ordering on positive roots is called a good ordering, for example $\alpha_{1,3} > \alpha_{1,2} > \alpha_{2,3} > \alpha_1 > \alpha_2 > \alpha_3$ for $\lie{sl}_4$. We consider the induced degree reverse lexicographic ordering on $U(\lie n^-)$ (all root vectors have degree $1$ and $(1,0,0,1) < (0,1,1,0)$) and extend this to $U(\lie g)$ by setting $\lie n^- > \lie h > \lie n^+$. \\

For type $A$ we set
\[
s_{\bullet, j} = \sum_{i = \ell}^j s_{i,j}, \; \; \;  s_{i, \bullet} = \sum_{j=i}^k s_{i,j} 
\] 
to define (following again \cite{FFLa1})
\[
\partial_{\mathbf{s}} \; - := [e_{\ell, \ell}^{s_{\ell+1, \bullet}}, [\;\cdots, [e_{\ell,k-1}^{s_{k, \bullet}}, [e_{k,k}^{s_{\bullet, k-1}}, [\;\cdots, [ e_{\ell+1,k}^{s_{\bullet, \ell}}, -] \cdots ]]] \cdots ]].
\]
To ease the notation, we do not recall $\partial_{\mathbf{s}}$ in type $C$ but refer to \cite[Theorem 3.4]{FFLa2}.

\medskip

Let $\mathbf{s}$ be supported on a Dyck path $\mathbf{p} \in D_{i,j}$, then it is shown in \cite{FFLa1, FFLa2}, that $LM(\partial_{\mathbf{s}} f_{i,j}^{\text{deg } \mathbf{s}}) = f^{\mathbf{s}}$. 
\medskip

\begin{defn}\label{defn-gb}
\noindent We define the Gröbner basis $M_\lambda$ as the union of $\{e_\alpha, h_\alpha - \lambda(h_\alpha) \mid \alpha \in R^+\}$ and
\[
\bigcup_{1 \leq i \leq j \leq n} \left\{ \partial_{\mathbf s} f_{ij}^{\deg \mathbf s} \left| \; \begin{matrix}
                \mathbf s \text{ is supported on } \mathbf p \in D_{ij}, \deg \mathbf s = |\lambda_j-\lambda_i|+1 \text{ and} \\
                \forall \mathbf t\ne \mathbf s, \mathbf t \leq \mathbf s, \mathbf t \text{ supported on }\mathbf q \in D_{kl}: \deg \mathbf t \leq |\lambda_l-\lambda_k|
            \end{matrix}\right.\right\}
\]
Here $ \mathbf t \leq \mathbf s$ compares componentwise. 
\end{defn}
The main result of this paper is the following
\begin{thm}\label{thm:main}
$M_\lambda$ is a minimal Gröbner basis of $I(\lambda)$ with respect to the ordering $\geq$.
\end{thm}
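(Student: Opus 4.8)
First I would recast the claim as a standard Gr\"obner basis certification in the G-algebra $U(\lie g)$: check $M_\lambda\subseteq I_\lambda$, compute the leading monomials of the generators for $\geq$, and then describe the monomials not lying in the leading term left ideal $\langle LM(M_\lambda)\rangle$. The plan is to show that these ``standard'' monomials, mapped into $V(\lambda)=U(\lie g)/I_\lambda$ by $u\mapsto u.v_\lambda$, are exactly the FFLV basis. Because the FFLV monomials are linearly independent in $V(\lambda)$, this forces $\langle LM(M_\lambda)\rangle=LM(I_\lambda)$, and then --- using $M_\lambda\subseteq I_\lambda$ and termination of reduction in a G-algebra --- $M_\lambda$ generates $I_\lambda$, is a Gr\"obner basis, and induces the FFLV basis; minimality will drop out of the extra condition built into Definition~\ref{defn-gb}.

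For the containment, $e_\alpha$ and $h_\alpha-\lambda(h_\alpha)$ annihilate $v_\lambda$ and hence lie in $I_\lambda=\ker(U(\lie g)\to V(\lambda))$ because $v_\lambda$ is a highest weight vector of weight $\lambda$. For $\partial_{\mathbf s}f_{ij}^{\deg\mathbf s}$ I would first observe that $f_{ij}^{\deg\mathbf s}.v_\lambda=0$: for $\beta\in R^+$ the triple $e_\beta,h_\beta,f_\beta$ spans an $\lie{sl}_2$ and $e_\beta.v_\lambda=0$, so $v_\lambda$ spans the highest weight line of a finite-dimensional $\lie{sl}_2$-module of highest weight $\lambda(h_\beta)$, giving $f_\beta^{\lambda(h_\beta)+1}.v_\lambda=0$; and $\deg\mathbf s=m_i+\dots+m_j+1=\lambda(h_{\alpha_{ij}})+1$, i.e.\ one more than the FFLV bound for paths in $D_{ij}$. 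Writing $\partial_{\mathbf s}f_{ij}^{\deg\mathbf s}$ as an iterated commutator $[A_1,[A_2,[\dots,[A_m,f_{ij}^{\deg\mathbf s}]\dots]]]$ with $A_1,\dots,A_m$ the $\lie n^+$-factors of $\partial_{\mathbf s}$, and peeling the brackets from the outside using $e_\gamma.v_\lambda=0$ for $\gamma\in R^+$, one finds $(\partial_{\mathbf s}f_{ij}^{\deg\mathbf s}).v_\lambda=A_1\cdots A_m\,f_{ij}^{\deg\mathbf s}.v_\lambda=0$, so $\partial_{\mathbf s}f_{ij}^{\deg\mathbf s}\in I_\lambda$.

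The substantial step is the description of the standard monomials. By \cite{FFLa1,FFLa2} we have $LM(\partial_{\mathbf s}f_{ij}^{\deg\mathbf s})=f^{\mathbf s}$ (leading coefficient normalized to $1$), and plainly $LM(e_\alpha)=e_\alpha$, $LM(h_\alpha-\lambda(h_\alpha))=h_\alpha$. Any PBW monomial involving some $e_\alpha$ or $h_\alpha$ already lies in $\langle LM(M_\lambda)\rangle$, so the standard monomials are precisely the pure monomials $f^{\mathbf t}$ whose exponent vector $\mathbf t$ does not dominate, componentwise, any support vector $\mathbf s$ occurring in $M_\lambda$. I would show this set equals the set $\Pi(\lambda)$ of lattice points of the FFLV polytope. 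One inclusion is immediate: an occurring $\mathbf s$ is supported on some $\mathbf p\in D_{ij}$ with $\sum_{\alpha\in\mathbf p}s_\alpha=\deg\mathbf s=m_i+\dots+m_j+1$, so $\mathbf t\geq\mathbf s$ makes $\mathbf t$ violate the $\mathbf p$-inequality and hence lie outside $\Pi(\lambda)$. For the converse, given $\mathbf t\notin\Pi(\lambda)$, pick $\mathbf p\in D_{ij}$ with $\sum_{\alpha\in\mathbf p}t_\alpha>m_i+\dots+m_j$ and trim the restriction $\mathbf t|_{\mathbf p}$ down to a vector $\mathbf s'\leq\mathbf t$ supported on $\mathbf p$ with $\deg\mathbf s'=m_i+\dots+m_j+1$; if $\mathbf s'$ satisfies the extra condition of Definition~\ref{defn-gb} it occurs in $M_\lambda$ and we are done, and otherwise that condition is witnessed by a strictly smaller vector $\mathbf u<\mathbf s'$ supported on some Dyck path $\mathbf q$ with $\deg\mathbf u$ exceeding the bound of $\mathbf q$, which can again be trimmed to a strictly smaller candidate still dominated by $\mathbf t$. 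As the componentwise order on $\mathbb Z_{\geq0}^{|R^+|}$ is well-founded, iterating this terminates at a support vector $\mathbf s$ occurring in $M_\lambda$ with $\mathbf s\leq\mathbf t$, so $\mathbf t$ is dominated as claimed.

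It then remains to conclude. The standard monomials are exactly $\{f^{\mathbf t}\mid\mathbf t\in\Pi(\lambda)\}$, and by the FFLV basis theorem their images $f^{\mathbf t}.v_\lambda$ form a basis of $V(\lambda)$; hence no nonzero element of $I_\lambda$ is a $\C$-linear combination of standard monomials, which is the statement $LM(I_\lambda)=\langle LM(M_\lambda)\rangle$, and with $M_\lambda\subseteq I_\lambda$ this also yields $\langle M_\lambda\rangle=I_\lambda$; thus $M_\lambda$ is a Gr\"obner basis with the claimed induced monomial basis. For minimality one checks that no leading monomial of $M_\lambda$ divides another: the cases involving $e_\alpha$ or $h_\alpha$ are trivial, and $f^{\mathbf s}\mid f^{\mathbf s'}$ for distinct occurring $\mathbf s,\mathbf s'$ would mean $\mathbf s\leq\mathbf s'$, whence feeding the test vector $\mathbf t=\mathbf s$ into the extra condition of Definition~\ref{defn-gb} for $\mathbf s'$ gives $\deg\mathbf s\leq\deg\mathbf s-1$, absurd. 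The hardest part will be the well-founded ``trimming'' descent in the third paragraph --- keeping every intermediate vector genuinely supported on a Dyck path and its degree equal to the correct FFLV bound plus one --- and carrying the same bookkeeping through the heavier type $C$ combinatorics, where the Dyck paths ending at highest roots of type $C$ subdiagrams and the form of $\partial_{\mathbf s}$ from \cite[Theorem 3.4]{FFLa2} demand extra care.
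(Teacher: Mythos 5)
Your proposal is correct, and while it rests on the same two pillars as the paper --- the identity $LM(\partial_{\mathbf s}f_{ij}^{\deg\mathbf s})=f^{\mathbf s}$ and the FFLV basis theorem --- its organization is genuinely different. The paper argues in three steps: it first gets $\langle M_\lambda\rangle=I_\lambda$ by comparing with the classical generating set $\{e_\alpha,\,h_\alpha-\lambda(h_\alpha),\,f_\alpha^{\lambda(h_\alpha)+1}\}$; it then takes an arbitrary $m\in I_\lambda$, reduces to $m\in I_\lambda\cap U(\lie n^-)$, shows $LM(m)$ must violate some Dyck path inequality (otherwise the order-compatibility with the FFLV basis forces $m\notin I_\lambda$), and produces a divisor in $LM(M_\lambda)$ by choosing a violated Dyck path that is \emph{minimal by length}; minimality of the basis is a separate support argument. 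You instead prove $M_\lambda\subseteq I_\lambda$ directly (your commutator-peeling against $v_\lambda$ is the paper's Step~1 in more detail), identify the standard monomials with the FFLV lattice points, and invoke the standard criterion that a subset of $I_\lambda$ whose standard monomials map to a linearly independent set in the quotient is a Gr\"obner basis; this yields $\langle M_\lambda\rangle=I_\lambda$ as a by-product, with no appeal to the presentation of $I_\lambda$. Two of your choices actually tighten the argument: the well-founded trimming descent in the componentwise order is a complete justification of the point the paper settles by the length-minimal path (length-minimality alone does not obviously deliver the componentwise condition of Definition~\ref{defn-gb}, whereas your descent terminates exactly at a vector satisfying it), and your minimality proof is a one-line consequence of that same condition, shorter than the paper's Step~3. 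When writing it up, spell out the equivalence in your last paragraph via division by $M_\lambda$ (as you flag in your opening plan), keep your normalization $\deg\mathbf s=\lambda(h_{\alpha_{i,j}})+1$ throughout since it is what makes $f_{ij}^{\deg\mathbf s}.v_\lambda=0$, and note that, like the paper, you implicitly assume distinct elements of $M_\lambda$ carry distinct leading monomials (i.e.\ each occurring exponent $\mathbf s$ arises from only one pair $(i,j)$); that deserves a sentence, e.g.\ by reading ``supported on $\mathbf p$'' appropriately.
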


\begin{cor}
If $K^a(\lambda)$ is the left ideal in $S(\lie n^-)U(\lie b)$ generated by $I^a(\lambda)$, then $M_\lambda$ is also a minimal Gröbner basis of $K^a(\lambda)$.
\end{cor}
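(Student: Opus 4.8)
The plan is to derive the corollary from Theorem~\ref{thm:main} by passing to the associated graded algebra for the PBW filtration, exploiting that $\geq$ was designed to be compatible with that filtration. Let $F_\bullet U(\lie g)$ be the algebra filtration placing $\lie n^-$ in degree $1$ and $\lie b = \lie h\oplus\lie n^+$ in degree $0$; since the Lie bracket never raises the $\lie n^-$-degree this is indeed an algebra filtration, with associated graded algebra $\mathrm{gr}_F U(\lie g) = S(\lie n^-)U(\lie b)$, the action of $\lie b$ on $S(\lie n^-)$ being by the induced derivations. By construction $\geq$ is the weighted-degree refinement of $F_\bullet$ (weight $1$ on $\lie n^-$, weight $0$ on $\lie b$), so for every $u\in U(\lie g)$ the leading monomial $LM(u)$ lies in the top $F$-degree part $\sigma(u)$ of $u$ and equals the leading monomial of $\sigma(u)$ computed in $\mathrm{gr}_F U(\lie g)$.

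I would then invoke the standard fact that a Gröbner basis descends to the associated graded ideal (for filtered algebras of solvable type, see e.g.\ \cite{Kan}): if $G$ is a left Gröbner basis of a left ideal $J\subseteq U(\lie g)$ with respect to an ordering refining $F_\bullet$, then $\sigma(G) = \{\sigma(g)\mid g\in G\}$ is a Gröbner basis of the graded left ideal $\mathrm{gr}_F J\subseteq S(\lie n^-)U(\lie b)$ and $LM(\sigma(g)) = LM(g)$ for all $g$. Applying this to $J = I(\lambda)$ and $G = M_\lambda$, which is a Gröbner basis by Theorem~\ref{thm:main}, two things remain: to identify $\mathrm{gr}_F I(\lambda)$ with $K^a(\lambda)$, and to check that $\sigma(M_\lambda)$ is the set $M_\lambda$ itself, now read inside $S(\lie n^-)U(\lie b)$.

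For the first point, the filtration $F_s V(\lambda) = F_s U(\lie g)\cdot v_\lambda$ on $V(\lambda) = U(\lie g)/I(\lambda)$ is exactly the one used to define $V^a(\lambda)$, so $\mathrm{gr}_F(U(\lie g)/I(\lambda)) = V^a(\lambda)$ as $S(\lie n^-)U(\lie b)$-modules and hence $\mathrm{gr}_F I(\lambda)$ is the annihilator of $v_\lambda$ in $S(\lie n^-)U(\lie b)$; this annihilator equals $K^a(\lambda)$, the remaining inclusion being the FFLV dimension equality $\dim V(\lambda) = \dim V^a(\lambda)$ --- equivalently the straightening law of \cite{FFLa1, FFLa2}, which forces the tautological surjection $S(\lie n^-)U(\lie b)/K^a(\lambda)\to V^a(\lambda)$ to be an isomorphism, since the FFLV monomials both span the source and are linearly independent in the target. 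For the second point, $e_\alpha$ and $h_\alpha-\lambda(h_\alpha)$ are already $F$-homogeneous of degree $0$ and so are their own symbols; and for $g = \partial_{\mathbf s}f_{ij}^{\deg\mathbf s}$ the top $F$-degree part of the iterated commutator is obtained by discarding precisely those summands in which some bracket $[e_\gamma,f_\delta]$ fell into $\lie h\oplus\lie n^+$ and thereby lowered the $\lie n^-$-degree --- what survives is the same formula evaluated in $S(\lie n^-)U(\lie b)$, i.e.\ the corresponding FFLV-type generator of $I^a(\lambda)$, which by \cite{FFLa1, FFLa2} still has leading monomial $f^{\mathbf s}$. Thus $\sigma(M_\lambda)$ carries exactly the same leading monomials as $M_\lambda$; since by Theorem~\ref{thm:main} none of them divides another and the generators are monic, $\sigma(M_\lambda) = M_\lambda$ is a minimal Gröbner basis of $K^a(\lambda)$.

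I expect the only genuinely substantive step to be the identification $\mathrm{gr}_F I(\lambda) = K^a(\lambda)$: everything else is a formal transport of Theorem~\ref{thm:main} along the associated-graded construction, but this step is where one must know that passing to $\mathrm{gr}_F$ does not enlarge the defining ideal beyond the one described in the corollary, and that is exactly where the FFLV results (rather than anything proved here) enter.
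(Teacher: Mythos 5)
Your proposal is correct and is essentially the paper's argument: the paper's entire proof is the one-line observation that ``the PBW degeneration is compatible with the monomial ordering,'' and your write-up simply unwinds that compatibility (the ordering refines the $\lie n^-$-weighted filtration, symbols preserve leading monomials, $\mathrm{gr}\, I(\lambda)$ is the defining ideal of $V^a(\lambda)$ via the FFLV results). The only caveat is one the paper shares: for the statement to make sense, $K^a(\lambda)$ must be read as the full defining ideal (annihilator of $v_\lambda$), including the degree-zero generators $e_\alpha$, $h_\alpha-\lambda(h_\alpha)$, not just the left ideal generated by $I^a(\lambda)\subset S(\lie n^-)$, which is exactly the reading you adopt.
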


\begin{proof}
The PBW degeneration is compatible with the monomial ordering.
\end{proof}

\section{Proof of the main result}\label{sec-main}
The following proposition is a standard result deduced from the PBW theorem.
\begin{prop}
    Let $f_{\beta_1}^{s_1} \cdots f_{\beta_k}^{s_k}$ be an ordered product in $U(\lie n^-)_s$. For each permutation $\sigma$, there exists $\mathbf{m} \in U(\lie n^-)_{s-1}$, such that
    \[
        f_{\beta_1}^{s_1} \cdots f_{\beta_k}^{s_k} = f_{\beta_{\sigma(1)}}^{s_{\sigma(1)}} \cdots f_{\beta_{\sigma(k)}}^{s_{\sigma(k)}} + \mathbf{m}.
    \]
    with $\text{deg } \mathbf{m}  < \sum s_i$.
\end{prop}

The proof of Theorem~\ref{thm:main} consists of three steps as we outline here. We denote $J_\lambda$ the ideal generated by the set $M_\lambda$. 
\begin{enumerate}
    \item[Step 1] Prove that $I_\lambda = J_\lambda$ 
    \item[Step 2] Prove that $M_\lambda$ is a Gröbner basis.
    \item[Step 3] Prove that $M_\lambda$ is a minimal Gröbner basis.
\end{enumerate}
\begin{proof}[Step 1]
$I_\lambda$ as left $U(\lie g)$ ideal is generated by (\cite[Theorem 21.4]{Hum78})
$$\{e_\alpha, h_\alpha - \lambda(h_\alpha), f_\alpha^{\lambda(h_\alpha) + 1} \mid \alpha \in R^+ \}$$
These elements are contained in $M_\lambda$ and hence $I_\lambda \subseteq J_\lambda$. 
On the other hand, let $\mathbf{s} \in \mathbb{R}^{|R^+|}$ be supported on a Dyck path $\mathbf{p} \in D_{i,j}$ and $\text{deg } \mathbf s = | \lambda_i - \lambda_j| +1$, then $f_{i,j}^{\text{deg} \mathbf s} = 0$ and hence $\partial_{\mathbf{s}} f_{i,j}^{\text{deg } \mathbf s} \in I_\lambda$. 
Which implies $M_\lambda \subseteq I_\lambda$ and so $J_\lambda \subseteq I_\lambda$.
\end{proof}

\begin{proof}[Step 2]
By definition of a Gröbner basis, we have to show that 
$$
LT(J_\lambda) = ( LT(f) \mid f \in M_\lambda) =: (LT(M_\lambda))
$$
Let $m \in I_\lambda$, we order each term of $m$ with respect to $\prec$. Then the leading term of $m$ is of the form
$$
(\prod_\alpha f_\alpha^{s_\alpha})(\prod_\alpha h_\alpha^{t_\alpha})(\prod_\alpha e_\alpha^{u_\alpha})
$$
If there is $\alpha \in R^+$ with $u_\alpha \neq 0$, then $m \in U(\lie g).e_\alpha \subseteq (LT(M_\lambda))$. If $u_\alpha = 0$ for all $\alpha \in R^+$ and there exists $\alpha \in R^+$ with $t_\alpha \neq 0$. Using $h_\alpha - \lambda(h_\alpha) \in M_\lambda$, showing that $LT(m) \in (LT(M_\lambda))$ reduces to showing that $LT(m') \in (LT(M_\lambda))$ for all $m'$ such that $LT(m') \in LT(I(\lambda)) \cap U(\lie n^-)$, that is $t_\alpha = u_\alpha = 0$ for all $\alpha \in R^+$.\\
Suppose now $m \in I(\lambda) \cap U(\lie n^-)$ with leading term $LT(m) = \prod_\alpha f_\alpha^{s_\alpha}$. Suppose there is no Dyck path $\mathbf{p} \in D_{i,j}$ such that $\sum_{\alpha \in  \mathbf{p}} s_\alpha > \lambda_i - \lambda_j$, then $LT(m)$ is in the FFLV basis of $V(\lambda)$ and since $m = LT(m) + \sum m'$, with $m' < LT(m)$, $ m\notin I_\lambda$. So there exists  $\mathbf{p} \in D_{i,j}$ with  $\sum_{\alpha \in  \mathbf{p}} s_\alpha > \lambda_i - \lambda_j$. We set 
$$\mathbf{s}' :=  \begin{cases} s_\alpha &\mid \alpha \in \mathbf{p} \\ 0 &\mid \text{ else } \end{cases} $$
We can assume that $\mathbf{p}$ is a minimal (by length) Dyck path $\mathbf{p}$ on which $LT(m)$ violates the FFLV conditions and adjust $\mathbf{s}'$ such that $\text{deg } \mathbf{s}' = \lambda_i - \lambda_j + 1$. 
Then $\partial_{\mathbf{s}'} f_{i,j}^{\text{deg } \mathbf{s}'} \in M_\lambda$ and $LT(m)$ is a multiple of $LT(\partial_{\mathbf{s}'} f_{i,j}^{\text{deg } \mathbf{s}'})$.
\end{proof}
\begin{proof}[Step 3]
Let $m \in M_\lambda$ and suppose $LT(m) \in (LT(M_\lambda \setminus\{m\}))$. Then there exists $m'$ with $LT(m') \in LT(M_\lambda \setminus\{m\})$ such that $LT(m)$ is a multiple of $LT(m')$. 
Let $\mathbf{s}, \mathbf{t}$ such that $LT(m') = LT(\partial_{\mathbf{s}} f_{i,j}^{\text{deg } \mathbf{s}})$ and $LT(m) = LT(\partial_{\mathbf{t}} f_{k, \ell}^{\text{deg } \mathbf{t}})$. Let $\mathbf{q}$ (resp. $\mathbf{p}$) be the Dyck paths whose conditions are violated (following the definition of $M_\lambda$).\\
Then $\text{supp } \mathbf{s} \subset \mathbf{q}, \text{supp } \mathbf{t} \subset \mathbf{p}$. Since $LT(m')$ divides $LT(m)$, one has $\text{supp } \mathbf{s} \subset \text{supp } \mathbf{t}$. 
One can adjust $\mathbf{p}$ on the complement of $\text{supp } \mathbf{t}$ such that $\text{supp } \mathbf{s}  \cap \mathbf{q} \subset \mathbf{p}$. If $\mathbf{p} \neq  \mathbf{q}$, this would imply that $m \notin M_\lambda$ since $\mathbf{s} | \mathbf{t}$. \\
Hence, the total degrees of $m$ and $m'$ are equal and accordingly, so $LT(m) = LT(m')$ which is not possible due to the construction of $\partial_{\mathbf{s}} f_{i,j}^{\text{deg } \mathbf{s}}$.
\end{proof}

\printbibliography
\end{document}